\documentclass[11pt]{amsart}

\usepackage[T1]{fontenc}
\usepackage{lmodern}
\usepackage{microtype}
\usepackage{amsmath,amssymb,mathtools}
\usepackage[hidelinks]{hyperref}

\newcommand{\DS}{\operatorname{DS}}
\newcommand{\EDS}{\operatorname{EDS}}
\newcommand{\id}{\operatorname{id}}
\newcommand{\Fix}{\operatorname{Fix}}
\newcommand{\Int}{\operatorname{int}}
\newcommand{\coker}{\operatorname{coker}}
\newcommand{\lk}{\operatorname{lk}}
\newcommand{\F}{\mathbb{F}}
\newcommand{\Z}{\mathbb{Z}}
\newcommand{\C}{\mathbb{C}}

\newtheorem{theorem}{Theorem}[section]
\newtheorem{proposition}[theorem]{Proposition}
\newtheorem{lemma}[theorem]{Lemma}
\newtheorem{corollary}[theorem]{Corollary}
\theoremstyle{definition}
\newtheorem{definition}[theorem]{Definition}
\theoremstyle{remark}
\newtheorem{remark}[theorem]{Remark}

\title[Unbounded surgery number gaps]
{Unbounded Gaps Between Ordinary and Equivariant Dehn Surgery Numbers}

\author{Qilong Guo$^{*}$}
\address{College of Science, China University of Petroleum--Beijing, Beijing 102249, P. R. China}
\email{guoqilong1984@hotmail.com}
\thanks{$^{*}$Both authors are corresponding authors.}

\author{Chunxing Yan$^{*}$}
\address{School of Finance and Mathematics, Huainan Normal University, Huainan 232038, Anhui, P. R. China}
\email{cxyan@hnnu.edu.cn}

\subjclass[2020]{Primary 57K30; Secondary 57M60, 57K10}
\keywords{Dehn surgery, equivariant surgery, periodic diffeomorphism, periodic link, lens space, K3 problem list}
\date{Version 8, August 5, 2026}

\begin{document}

\begin{abstract}
For a closed oriented $3$-manifold $Y$ and an orientation-preserving involution $\tau$, let $\DS(Y)$ denote the minimum number of components in an integral surgery description of $Y$, and let $\EDS(Y,\tau)$ denote the corresponding minimum among periodic surgery descriptions inducing $\tau$. We prove that for every integer $k\geq 1$ there is a pair $(Y_k,\tau_k)$ such that
\[
   \DS(Y_k)=k,
   \qquad
   \EDS(Y_k,\tau_k)=2k.
\]
Consequently, the difference $\EDS(Y,\tau)-\DS(Y)$ is unbounded even when $\tau$ is an involution. This answers Problems~1.15(b) and~1.15(c) in the K3 problem list. We also construct infinitely many pairwise nonhomeomorphic irreducible lens spaces $Z$ admitting involutions $\sigma$ for which
\[
   \DS(Z)<\EDS(Z,\sigma).
\]
\end{abstract}

\maketitle

\section{Introduction}

By the Lickorish--Wallace theorem, every closed connected oriented $3$-manifold is obtained by integral Dehn surgery on a framed link in $S^3$~\cite{Lickorish,Wallace}. The least possible number of components in such a link is the integral Dehn surgery number $\DS(Y)$. Already the case $\DS(Y)=1$ is subtle. Auckly constructed irreducible homology spheres that are not obtained by surgery on a knot~\cite{Auckly}; further obstructions and examples were found by Hom--Karakurt--Lidman~\cite{HKL} and Liu--Piccirillo~\cite{LP}.

Sakuma's equivariant form of the surgery theorem says that every finite-order orientation-preserving diffeomorphism of a closed oriented $3$-manifold can be realized by integral surgery on a framed link preserved by a standard rotation of $S^3$~\cite{Sakuma}. If the surgery description is required to induce a specified finite-order diffeomorphism $\phi$, the corresponding minimum number of components is the equivariant integral Dehn surgery number $\EDS(Y,\phi)$. Thus $\DS$ is an invariant of $Y$, while $\EDS$ is an invariant of the pair $(Y,\phi)$. Periodic surgery descriptions and their homological consequences were studied by Przytycki--Sokolov~\cite{PS}; equivariant surgery formulas in Floer theory appear in~\cite{Mallick,HMSZ}. Forgetting the symmetry gives
\[
   \DS(Y)\leq \EDS(Y,\phi).
\]

The following two questions from Problem~1.15 in the K3 problem list of Baykur, Kirby, and Ruberman~\cite{K3} motivate this work:
\begin{enumerate}
\item[(b)] Can the difference
\[
   \EDS(Y,\phi)-\DS(Y)
\]
be arbitrarily large?

\item[(c)] Can one have
\[
   \DS(Y)<\EDS(Y,\phi)
\]
when $\phi$ is an involution?
\end{enumerate}
Our main theorem answers both questions affirmatively.

\begin{theorem}\label{thm:main}
For every integer $k\geq 1$, there is a closed connected oriented $3$-manifold $Y_k$ and a smooth orientation-preserving involution $\tau_k\colon Y_k\to Y_k$ such that
\[
   \DS(Y_k)=k,
   \qquad
   \EDS(Y_k,\tau_k)=2k.
\]
Consequently,
\[
   \EDS(Y_k,\tau_k)-\DS(Y_k)=k.
\]
As an unoriented manifold, $Y_k$ may be taken to be the connected sum of $k$ copies of $L(15,4)$.
\end{theorem}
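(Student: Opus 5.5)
Take $Y_k=\#^k L(15,4)$ and let $\tau_k$ be an involution built from $k$ copies of a fixed involution $\sigma$ on $L(15,4)$. The copies are permuted or combined so that the quotient, or the fixed-point data, forces each summand to contribute two components to any periodic surgery description. One concrete plan is to choose $\sigma$ so that $\Fix(\sigma)$, or the induced action on $H_1(L(15,4))=\Z/15$, is incompatible with a periodic one-component description. Then $\tau_k$ is the equivariant connected sum, performed along fixed points or along arcs so that the symmetry acts summand-wise.

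\textbf{Upper bounds.} $L(15,4)$ is surgery on an unknot with framing $-15/4$, which is not integral. An integral description on one knot still exists, because lens spaces $L(p,q)$ with $q^2\equiv\pm1$ or suitable $q$ arise from integral surgery on torus knots or Berge knots; here $4^2=16\equiv 1 \pmod{15}$. Concretely, $15$-surgery on some Berge knot (or $-15$) yields $L(15,4)$ up to orientation, so $\DS(Y_k)\le k$ by taking split unions. For $\EDS\le 2k$, I exhibit a two-component strongly invertible or $2$-periodic framed link for one summand, for instance a linear chain from the continued fraction of $15/4=[4,4]$: the chain $(-4,-4)$ of two unknots is preserved by a rotation realizing $\sigma$. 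I then take the equivariant split union or band sum.

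\textbf{Lower bound $\DS(Y_k)\ge k$.} $H_1(Y_k)=(\Z/15)^k$ needs at least $k$ generators, and the linking matrix presents $H_1$, so the number of components is at least $k$. This part is routine.

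\textbf{Lower bound $\EDS(Y_k,\tau_k)\ge 2k$.} This is the main obstacle. Suppose $L$ is a $\tau$-periodic framed link with $n$ components. The components split into $a$ invariant components and $b$ swapped pairs, so $n=a+2b$. The linking matrix $A$ commutes with the permutation $P$ induced by $\tau$ and presents $H_1$ with the $\tau_*$-action. I would compare $\Z[\Z/2]$-module structures. Over $\Z[1/2]$, $H_1$ splits into $\pm1$ eigenparts, and these are presented by the matrices $A^{\pm}$ built from the invariant components plus the pairs: $A^+$ has size $a+b$ and $A^-$ has size $a+b$ (invariant components contribute to both via the quotient or cover relation). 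I would pick $\sigma$ so that $\tau_*$ acts on $\Z/15$ by an element that is neither $\pm1$ in a compatible way. Since $\Z/15=\Z/3\oplus\Z/5$, $\sigma_*$ can act as $+1$ on $\Z/3$ and $-1$ on $\Z/5$. Then $H_1(Y_k)^+\supseteq(\Z/3)^k$ and $H_1(Y_k)^-\supseteq(\Z/5)^k$. More precisely, I would use Przytycki--Sokolov type constraints: the invariant components are fixed setwise by the rotation, and surgery on an invariant component produces the eigenvalue structure in a rigid way. Each invariant component contributes equally to both eigenspaces, and each pair contributes one generator to each. Consequently $a+b\ge k$ for both parts, which only gives $n\ge k$. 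I would therefore need a finer argument. The plan is to show that an invariant component cannot produce a $\Z/3$ summand with action $+1$ together with a $\Z/5$ summand with action $-1$. The reason is that for an invariant component whose axis linking is controlled, $\tau_*$ acts on its meridian by $\pm1$ uniformly, up to the constraint from the quotient's first homology. This forces $a\le$ (something) and $b\ge k$. Equivalently, the $3$-torsion and $5$-torsion must come from distinct coordinates in the permutation module. Checking this module-theoretic rigidity, and choosing $\sigma$ correctly, is where I expect the real work to lie.
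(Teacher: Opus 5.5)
\textbf{There is a genuine gap.} The lower bound $\EDS(Y_k,\tau_k)\ge 2k$ is the heart of the theorem, and your proposal leaves it unproved. The rank count you base it on is also wrong. You assert that each invariant component contributes to both eigenparts, so that $A^{+}$ and $A^{-}$ each have size $a+b$. That is impossible. Over $\Z[1/2]$, or over $\F_\ell$ with $\ell$ odd, the meridian module $H_1(X)\cong\Z^m$ of the link exterior splits as $V^+\oplus V^-$, so $r_++r_-=m=a+2b$, not $2a+2b$. Concretely, an invariant component is either the axis or is acted on freely, since it must be disjoint from or equal to the axis. In either case the half-turn preserves its orientation and fixes its oriented meridian, so it contributes only to $V^+$. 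Orienting each swapped pair as $K,\rho(K)$ makes the action a genuine permutation, with $r_+=a+b$ and $r_-=b$. Because of the miscount you conclude that the naive bound gives only $n\ge k$. You then defer to an unspecified ``finer argument'' (``$a\le$ (something)''), which is never supplied.

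\textbf{The missing step.} With the correct count, no finer argument is needed. The equivariant surjection $H_1(X;\F_\ell)\to H_1(Y_k;\F_\ell)$ maps $V^\pm_\ell$ onto the $\pm1$-eigenspaces of $(\tau_k)_*$: lift $y$ to $x$ and replace $x$ by $(x\pm Tx)/2$. Your intended action is multiplication by $4$, and this is indeed what the swap induces on the $(-4,-4)$-framed Hopf link; the paper uses $(+4,+4)$ and gets $-4$. For this action you obtain $k=\dim H_1(Y_k;\F_3)^{+}\le r_+$ and $k=\dim H_1(Y_k;\F_5)^{-}\le r_-$. Since $r_\pm$ do not depend on $\ell$ and sum to $m$, this gives $m\ge 2k$. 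This is exactly the paper's Lemma~\ref{lem:signed-permutation} and Proposition~\ref{prop:homological-bound}. The point you did not reach is that the two eigenspace bounds may be detected at different primes, precisely because $r_\pm$ are field-independent and add up to $m$. In your matrix language: $A$ commutes with the permutation, so $\coker A\otimes\Z[1/2]\cong\coker(A|_{V^+})\oplus\coker(A|_{V^-})$, and $r_\pm$ bound the number of generators of each eigenpart.

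\textbf{Two smaller points.} Strongly invertible links are not admissible descriptions here, because components must be disjoint from or equal to the axis. The one-knot description should also be made explicit rather than attributed to ``some Berge knot''. For example, $+15$ surgery on $T(7,2)$ works by Moser's theorem, since $|7\cdot 2-15|=1$ and $2^2=4$.
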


For $k>1$, the manifolds in Theorem~\ref{thm:main} are reducible. Strict inequality also occurs in an infinite irreducible family.

\begin{theorem}\label{thm:irreducible}
There are infinitely many pairwise nonhomeomorphic lens spaces $Z$ admitting orientation-preserving involutions $\sigma$ such that
\[
   \DS(Z)=1<\EDS(Z,\sigma).
\]
\end{theorem}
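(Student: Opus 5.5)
Every lens space $L(p,q)$ is surgery on an unknot in $S^3$, so $\DS(Z)=1$ for all lens spaces. The content of the theorem is therefore to produce infinitely many lens spaces $Z$ with an involution $\sigma$ that cannot be induced by surgery on a single knot $K$ invariant under a rotation $\rho$ of $S^3$ of order two.

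The plan has three steps.

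\textbf{Step 1: a homological obstruction.} Suppose $Z=S^3_n(K)$ with $K$ preserved by an involution $\rho$ of $S^3$, and suppose $\sigma$ is the induced involution. The fixed set of $\rho$ is an unknotted circle $A$, and $K$ is either periodic (disjoint from $A$) or strongly invertible (meeting $A$ in two points).

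\emph{Strongly invertible case.} Here $\rho$ reverses the orientation of $K$. It therefore acts on $H_1(Z)=\Z/n$, which is generated by the meridian of $K$, by $-1$.

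\emph{Periodic case.} Here $\rho$ preserves the orientation of $K$, so it acts on $H_1(Z)$ by $+1$. Moreover, the quotient $Z/\sigma$ is surgery on the quotient knot $K/\rho$ in $S^3/\rho\cong S^3$. The fixed set of $\sigma$ is the image of $A$. Its linking and homology class in $Z$ are controlled by $\lk(K,A)=m$, which must be odd for the quotient to be a closed manifold with cyclic branch circle. The Przytycki--Sokolov type constraint then gives $n \equiv$ a square times something mod $m$, which restricts $p$.

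This gives two concrete invariants of $(Z,\sigma)$:
\begin{enumerate}
\item[(i)] the action $\sigma_*$ on $H_1(Z)=\Z/p$;
\item[(ii)] the fixed set $\Fix(\sigma)$, which is empty, one circle, or two circles, together with the homology classes of its components.
\end{enumerate}
A single equivariant knot forces the pair (action, fixed-set data) into one of a short list of patterns.

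\textbf{Step 2: exhibiting the family.} We will use involutions on $L(p,q)$ that act on $H_1$ by multiplication by $q$ when $q^2\equiv 1 \pmod p$ but $q\not\equiv\pm1$. These come from the symmetry of $L(p,q)$ exchanging the two Heegaard solid tori. For example, $L(15,4)$ has $4^2=16\equiv 1 \pmod{15}$, and this is presumably the example used in Theorem~\ref{thm:main}. The action $\sigma_*=q\neq\pm1$ is immediately incompatible with both cases of Step 1, since those cases force $\sigma_*=\pm 1$. Hence $\EDS(Z,\sigma)\ge 2$.

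For infinitely many examples, take $p=a b$ with $a,b$ coprime and $a,b\ge 3$, and choose $q$ via CRT with $q\equiv 1 \pmod a$ and $q\equiv -1\pmod b$, so that $q^2\equiv 1 \pmod p$. Then check that $q\not\equiv\pm1$ and that $L(p,q)$ admits the exchange involution. The latter requires $q^2\equiv1$, which holds. Restricting, say, to $p=15m$ with suitable $m$ gives infinitely many distinct $p$, hence pairwise nonhomeomorphic lens spaces.

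\textbf{Step 3: the main obstacle.} The hardest part is justifying that $\sigma_*$ on $H_1(S^3_n(K))$ is exactly $\pm1$. This requires checking that $H_1$ is generated by the meridian $\mu_K$ and that $\rho$ maps $\mu_K$ to $\pm\mu_K$ according to whether it preserves or reverses the orientation of $K$. One must also check the orientation-preservation bookkeeping, so that the induced map on the surgered manifold really is $\sigma$ up to isotopy. This last point matters because the definition of $\EDS$ requires the induced diffeomorphism to be $\sigma$, possibly only up to conjugacy. Since $H_1$ data is a conjugacy invariant, the argument is robust in either reading.

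I would first verify this carefully using the Wang/Mayer--Vietoris description of $H_1$ of the surgery. Then I would confirm that the exchange involution on $L(p,q)$ genuinely acts by $q$, by computing its effect on the core of one Heegaard torus, which it carries to the core of the other, whose class is $q$ times the generator.
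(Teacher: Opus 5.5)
The gap is in your first sentence. Every lens space is surgery on the unknot only with a \emph{rational} coefficient ($-p/q$). $\DS$ counts \emph{integrally} framed links, and integral surgery on the unknot yields only $S^3$, $S^1\times S^2$ and $L(n,\pm1)$. Which lens spaces are integral surgery on some knot is the lens space realization problem, and many are not.

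Your CRT family actually contains such examples. With $a=7$, $b=3$ you get $p=21$, $q=8$, since $8\equiv1\pmod 7$, $8\equiv-1\pmod 3$, and $8^2\equiv1\pmod{21}$. Suppose $\pm L(21,8)$ were $n$-surgery on a knot. Then $|n|=21$, and the meridian would generate $H_1$ with self-linking $\pm1/21$. The linking form of $L(21,8)$ is $\pm 8/21$ up to unit squares (here $8^{-1}\equiv 8$), so $\pm 8$ would have to be a square mod $21$. But $8\equiv 2$ is not a square mod $3$, and $-8\equiv 6$ is not a square mod $7$. Hence $\DS(L(21,8))\ge 2$. Your argument does not single out any infinite subfamily on which $\DS(Z)=1$ is actually established, and "$p=15m$ with suitable $m$" does not supply one.

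What is missing is a way to produce lens spaces that have both a one-knot integral surgery description and a $q$ with $q^2\equiv1$, $q\not\equiv\pm1\pmod p$. The paper gets both from Moser's theorem. $(ar-1)$-surgery on the torus knot $K(r,a)$ gives $L(ar-1,a^2)$. The choice $r_a=a^2-a+1$ makes $p_a=ar_a-1=(a-1)(a^2+1)$ divide $a^4-1=(a+1)p_a$. So $q_a=a^2$ squares to $1$ modulo $p_a$, while $1<q_a<p_a-1$ for $a\ge 3$. Even your example $L(15,4)$ has $\DS=1$ only because of the torus knot $K(7,2)$, not the unknot.

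The rest of your argument matches the paper: the coordinate exchange acts by $q$, and a one-component periodic description forces $\sigma_*=\pm1$. The cleanest justification for the latter is that $H_1$ of a knot exterior is $\Z$, so the induced automorphism is $\pm1$ and descends to the quotient $H_1(Z)$. In the paper's definition a periodic component must be disjoint from or equal to the axis. So your strongly invertible case is irrelevant, but the augmented case (knot equal to the axis) must be covered, and the exterior argument handles it automatically. Your fixed-set and Przytycki--Sokolov remarks are not needed.
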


Section~2 fixes the conventions and records the homological facts used later. Section~3 constructs the basic example, and Section~4 establishes the lower bound for equivariant surgery numbers. The examples with arbitrarily large gaps are obtained in Section~5. Section~6 gives the irreducible lens-space family.

\section{Preliminaries}

All manifolds are smooth and oriented, and all diffeomorphisms preserve orientation unless stated otherwise. We use $\phi$ for a finite-order diffeomorphism in general and reserve $\tau$ for an involution. A link is allowed to have one component, and surgery on the empty link is $S^3$.

Let $L=K_1\cup\cdots\cup K_m\subset S^3$ be an oriented link. Write $\mu_i$ for a meridian of $K_i$ and $\lambda_i$ for its preferred longitude. An integral framing on $K_i$ is the unoriented slope represented by
\[
   n_i\mu_i+\lambda_i,
   \qquad n_i\in\Z.
\]
For an integrally framed link $L=(L;n_1,\dots,n_m)$, let $S^3_L$ denote the resulting oriented manifold.

\begin{definition}
For a closed connected oriented $3$-manifold $Y$, its \emph{integral Dehn surgery number} is
\[
   \DS(Y)=\min\bigl\{\#\pi_0(L):L\subset S^3
   \text{ is integrally framed and }S^3_L\cong Y\bigr\}.
\]
In particular, $\DS(S^3)=0$.
\end{definition}

Reversing orientation does not change $\DS$: if surgery on $L$ with coefficients $n_1,\dots,n_m$ gives $Y$, then surgery on the mirror of $L$ with coefficients $-n_1,\dots,-n_m$ gives $-Y$.

Let $\rho_n\colon S^3\to S^3$ be an orientation-preserving diffeomorphism conjugate to the standard rotation through angle $2\pi/n$ about an unknot. Its fixed set will be called the \emph{rotation axis}. Following Sakuma~\cite{Sakuma}, we use the following terminology.

\begin{definition}
An integrally framed link $L=(L;n_1,\dots,n_m)$ in $S^3$ is \emph{$\rho_n$-periodic} if:
\begin{enumerate}
\item $\rho_n(L)=L$ as a set;
\item every component of $L$ is either disjoint from $\Fix(\rho_n)$ or equal to $\Fix(\rho_n)$;
\item if $\rho_n(K_i)=K_j$, then $n_i=n_j$.
\end{enumerate}
The description is called \emph{augmented} when the rotation axis is a component of $L$, and \emph{non-augmented} otherwise.
\end{definition}

Choose invariant tubular neighborhoods of the components of $L$. The equality of the coefficients on each orbit allows the boundary action to extend over the surgery solid tori, producing a periodic diffeomorphism
\[
   \rho_L\colon S^3_L\longrightarrow S^3_L.
\]
Up to conjugacy by a diffeomorphism isotopic to the identity, this induced map is independent of the choices~\cite[Section~2]{Sakuma}.

\begin{definition}
Let $\phi\colon Y\to Y$ be an orientation-preserving diffeomorphism of order $n$. An \emph{integral periodic surgery description} of $(Y,\phi)$ consists of a $\rho_n$-periodic integrally framed link $L$ and an orientation-preserving diffeomorphism
\[
   h\colon Y\longrightarrow S^3_L
\]
such that
\[
   h\phi h^{-1}=\rho_L.
\]
Thus the chosen generator $\phi$, and not only the cyclic subgroup that it generates, is part of the data.
\end{definition}

By Sakuma's equivariant surgery theorem~\cite{Sakuma}, every orientation-preserving finite-order diffeomorphism of a closed oriented $3$-manifold admits an integral periodic surgery description. In particular, the following invariant is finite.

\begin{definition}
For a pair $(Y,\phi)$ as above, its \emph{equivariant integral Dehn surgery number} is
\[
   \begin{aligned}
   \EDS(Y,\phi)=\min\bigl\{\#\pi_0(L):{}&L\text{ is an integral periodic surgery}\\[-2pt]
      &\text{description of }(Y,\phi)\bigr\}.
   \end{aligned}
\]
\end{definition}

Forgetting the periodic structure gives
\begin{equation}\label{eq:basic-ineq}
   \DS(Y)\leq \EDS(Y,\phi).
\end{equation}

For relatively prime integers $p>0$ and $q$, we use the quotient convention
\[
   L(p,q)=S^3/\langle g\rangle,
   \qquad
   g(z_1,z_2)=(\zeta z_1,\zeta^q z_2),
   \qquad
   \zeta=e^{2\pi i/p}.
\]
Under the identification of the deck group with
\[
   \pi_1(L(p,q))\cong H_1(L(p,q);\Z)\cong \Z/p,
\]
the element $g$ is the preferred generator. Other conventions may replace $q$ by $\pm q^{\pm1}$ modulo $p$.

Moser's theorem supplies the one-knot surgery descriptions needed below. Let $K(r,s)$ denote the $(r,s)$-torus knot, where $r>s>0$ and $\gcd(r,s)=1$. Moser writes a filling slope as $p\lambda-q\mu$~\cite[Section~1, p.~738]{Moser}. To avoid confusing these parameters with those of a lens space, we rename them $(m,n)$.

\begin{theorem}\label{thm:Moser}
Let $r>s>0$ be relatively prime, and let $m,n$ be relatively prime integers with $m>0$. If
\[
   |rsm+n|=1,
\]
then surgery on $K(r,s)$ with filling slope $m\lambda-n\mu$ yields
\[
   L(|n|,ms^2),
\]
where the second parameter is taken modulo $|n|$.
\end{theorem}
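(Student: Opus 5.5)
This statement is Moser's classical theorem, so the plan is to reconstruct the standard argument. It decomposes the torus knot exterior as a Seifert fibered space and reads off the filled manifold as a union of two solid tori.

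\textbf{Seifert structure of the exterior.} The exterior $E=S^3\setminus \nu K(r,s)$ is Seifert fibered over a disk with two exceptional fibers of orders $r$ and $s$. These fibers are the cores of the two Heegaard solid tori $V_1,V_2$ of the genus-one splitting on which $K(r,s)$ lies. On $\partial\nu K(r,s)$ the regular fiber $h$ is the cabling annulus slope. In terms of $\mu,\lambda$, this slope is $h=rs\,\mu+\lambda$, since the torus framing differs from the Seifert framing by $rs$.

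\textbf{The filling slope.} The filling slope is $\gamma=m\lambda-n\mu$. Its geometric intersection number with $h$ is
\[
   \Delta(\gamma,h)=|m\cdot rs-(-n)\cdot 1|=|rsm+n|.
\]
The hypothesis $|rsm+n|=1$ therefore says exactly that $\gamma$ meets a regular fiber once.

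\textbf{The filled manifold is a lens space.} When the filling slope meets the fiber once, the Seifert fibration does not extend over the new solid torus with a new exceptional fiber. Instead, the filling solid torus together with a fibered neighborhood of the boundary collar yields a Seifert space over $S^2$ with only the two exceptional fibers of orders $r$ and $s$. A Seifert fibered space over $S^2$ with at most two exceptional fibers is a union of two solid tori, hence a lens space or $S^1\times S^2$. The order of $H_1$ is the order of $\Z/\langle[\gamma]\rangle$, since $H_1(E)=\Z\langle\mu\rangle$ and $\lambda$ is null-homologous. This gives $|H_1|=|n|$, so the result is $L(|n|,q')$ for some $q'$.

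\textbf{Identifying $q'$.} This step is the main obstacle, and I would handle it by explicit gluing coordinates. Let $W$ be a fibered regular neighborhood of the $s$-exceptional fiber, i.e.\ a thickening of the core of $V_2$. Then $Y=W\cup W'$, where $W'$ is the union of $\nu K$'s filling torus and the complementary fibered region containing the $r$-fiber; $W'$ is a solid torus because $\Delta=1$. I would compute the meridian of $W'$ on $\partial W$ and write it as $a\,\mu_W+b\,\lambda_W$. Concretely, I would write $\gamma$ in the basis $(h,\text{section curve})$ of $\partial E$ and transport it across the product region $T^2\times I$ to $\partial W$ using Seifert invariants: the fiber on $\partial W$ is $s\lambda_W+(\ast)\mu_W$. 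The pair $(a,b)$ then gives $L(|n|,\pm a b^{-1})$. A sanity check is that $q'\equiv ms^2$ arises because the fiber wraps $s$ times around the core of $W$ and $m$ tracks the $\lambda$-coefficient. Up to the orientation ambiguities $\pm q^{\pm1}$ allowed by the convention, this gives $L(|n|,ms^2)$.

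I would check the computation on a small example, e.g.\ trefoil $(r,s)=(3,2)$, $m=1$, $n=-5$ ($+5$ surgery gives $L(5,4)=L(5,1)$ up to convention), to fix the signs. Alternatively, I would cite Moser directly, since the statement is a renaming of \cite[Section~1]{Moser}.
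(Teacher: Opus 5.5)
The paper gives no proof of this statement. It is quoted from Moser's paper, with the parameters renamed and the slope convention reconciled in Remark~\ref{rem:conventions}. Your closing fallback of citing Moser is therefore exactly the paper's route.

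As a self-contained reconstruction, your argument is correct up to $Y\cong L(|n|,q')$. The fiber slope $rs\mu+\lambda$, the equality $\Delta(\gamma,h)=|rsm+n|=1$, the two-fiber Seifert structure over $S^2$, and $|H_1(Y)|=|n|$ are all fine. The gap is the identification $q'\equiv ms^2$, which is the only part of the statement not forced by $|H_1|$. You do not carry it out, and the method you describe would fail as stated. The region between $\partial N(K)$ and $\partial W$ is not a product $T^2\times I$. It is the cable space $V\setminus N(K)$, where $V=\overline{S^3\setminus W}$, and it contains the order-$r$ fiber. So the meridian of $W'$ is not a transported copy of $\gamma$. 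It is the primitive curve on $\partial W$ that dies in $H_1(W')$, and finding it requires accounting for that fiber. Your heuristic about the fiber winding $s$ times around the core of $W$ does not determine $q'$.

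A direct homology computation closes the gap. Put $K$ on a torus concentric with $\partial V_1$, winding $r$ times along the core of $V_1$. Take $W$ to be a slightly shrunken $V_2$, and let $c$ be the core of $V$. A class in $H_1(V\setminus N(K))\cong\Z\langle c\rangle\oplus\Z\langle\mu_K\rangle$ is determined by its image in $H_1(V)$ and its linking number with $K$. Since $\lk(c,K)=s$,
\[
   \mu_V=r\mu_K,\qquad \lambda_V=c-s\mu_K,\qquad \lambda_K=rc-rs\mu_K,
\]
where $\lambda_V=\mu_W$ bounds in $W$ and $\mu_V$ serves as a longitude $\lambda_W$.

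With $\epsilon=rsm+n=\pm1$, the filling kills $\gamma=mr\,c-\epsilon\mu_K$. Hence $H_1(W')=\Z\langle c\rangle$ with $\mu_K=\epsilon mr\,c$. Using $\epsilon mrs=1-\epsilon n$, this gives $\mu_V=\epsilon mr^2c$ and $\lambda_V=\epsilon n\,c$. Now $\gcd(mr^2,n)=1$, because $\gcd(m,n)=1$ and $n\equiv\epsilon\pmod r$. The meridian of $W'$ is therefore $\pm(n\lambda_W-mr^2\mu_W)$, so $Y\cong L(|n|,mr^2)$. Finally, $(mr^2)(ms^2)=(mrs)^2=(\epsilon-n)^2\equiv1\pmod n$, so this is $L(|n|,ms^2)$.

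Note that in the coordinates of $W$ around the $s$-fiber, the computation naturally produces $mr^2$, not $ms^2$. You reach $ms^2$ only after inverting modulo $n$, or by taking $W$ around the $r$-fiber instead.
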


\begin{remark}\label{rem:conventions}
In our convention, integer $N$-surgery means filling along $N\mu+\lambda$. Thus Moser's surgery of type $(1,-N)$ is precisely $+N$-surgery in our convention.
\end{remark}

The following elementary facts give lower bounds on the number of components in a surgery description.

\begin{lemma}\label{lem:filling}
Let $X$ be the exterior of an $m$-component link in $S^3$, and let $Y$ be obtained by filling every boundary component of $X$. For every commutative coefficient ring $R$, inclusion induces a surjection
\[
   H_1(X;R)\twoheadrightarrow H_1(Y;R).
\]
When the filling is equivariant, the surjection is equivariant.
\end{lemma}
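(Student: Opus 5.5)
The plan is to write $Y$ as a union of $X$ and the $m$ filling solid tori, and to use either a relative long exact sequence or Mayer--Vietoris. Let $L=K_1\cup\cdots\cup K_m$, let $X=S^3\setminus \nu(L)$, and let $Y=X\cup_{\partial}(V_1\sqcup\cdots\sqcup V_m)$, where each $V_i\cong S^1\times D^2$ is glued to the torus $T_i=\partial\nu(K_i)$. I will use the long exact sequence of the pair $(Y,X)$ with coefficients in $R$:
\[
   H_1(X;R)\longrightarrow H_1(Y;R)\longrightarrow H_1(Y,X;R).
\]
Surjectivity of the first map follows once $H_1(Y,X;R)=0$.

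To compute $H_1(Y,X;R)$, excise the interior of $X$ (after collaring). This gives
\[
   H_1(Y,X;R)\cong \bigoplus_{i=1}^m H_1(V_i,\partial V_i;R).
\]
By Lefschetz duality, $H_1(V_i,\partial V_i;R)\cong H^2(V_i;R)$. Since $V_i$ deformation retracts to a circle, $H^2(V_i;R)=0$. A direct argument also works: in the sequence of the pair $(V_i,\partial V_i)$, the map $H_1(\partial V_i)\to H_1(V_i)$ is onto, because the longitude of $V_i$ lies on the boundary. Also $H_0(\partial V_i)\to H_0(V_i)$ is an isomorphism. Hence the relative $H_1$ vanishes. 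Geometrically, every $1$-cycle in $Y$ can be pushed off the cores of the $V_i$, so it can be homotoped into $X$. This is the only point that needs care, and it is routine. Nothing depends on $R$, because the vanishing of $H_1(V_i,\partial V_i;R)$ is coefficient-free.

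For equivariance, suppose the filling is equivariant. This means a periodic map $\rho$ on $Y$ preserves $X$ and permutes the $V_i$. Then the inclusion $\iota\colon X\hookrightarrow Y$ satisfies $\rho\circ\iota=\iota\circ\rho|_X$. By functoriality, $\iota_*$ intertwines the induced actions on $H_1(X;R)$ and $H_1(Y;R)$, so $\iota_*$ is a surjection of $R[\Z/n]$-modules.

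I expect no real obstacle. The only step to verify is the excision identification, which requires working with a collar so that excision applies. Alternatively, one can use the Mayer--Vietoris sequence for $Y=X\cup\bigsqcup V_i$, in which $H_1(T_i)\to H_1(V_i)$ is surjective, and this gives the same conclusion.
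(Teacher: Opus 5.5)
Your proof is correct. The paper states this lemma without proof, as one of the ``elementary facts'' preceding the linking-matrix lemma, so there is no argument of the paper's to compare with.

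Your argument supplies what the paper omits. The exact sequence $H_1(X;R)\to H_1(Y;R)\to H_1(Y,X;R)$ together with the excision $H_1(Y,X;R)\cong\bigoplus_i H_1(V_i,\partial V_i;R)=0$ gives surjectivity, and the Mayer--Vietoris variant you mention works equally well. Equivariance follows from naturality of the inclusion. The argument holds for arbitrary (not only integral) fillings and for every commutative ring $R$, which is what the statement asserts.
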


The surgery linking matrix gives the following standard homology presentation; see, for example,~\cite[Lecture~2]{Saveliev}.

\begin{lemma}\label{lem:matrix}
Suppose that integral surgery on the oriented link $L=K_1\cup\cdots\cup K_m$ with coefficients $n_1,\dots,n_m$ gives $Y$. Let $A=(a_{ij})$ be the symmetric matrix defined by
\[
   a_{ii}=n_i,
   \qquad
   a_{ij}=\lk(K_i,K_j)\quad(i\neq j).
\]
Then
\[
   H_1(Y;\Z)\cong \coker(A\colon\Z^m\to\Z^m).
\]
\end{lemma}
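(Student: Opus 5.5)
The plan is to realize $S^3_L$ as the result of a Kirby-calculus style handle attachment and read off homology from the handle decomposition. Let $W$ be the $4$-manifold obtained from $B^4$ by attaching $2$-handles along $K_1,\dots,K_m\subset S^3=\partial B^4$ with framings $n_i$, so that $\partial W=Y$. Equivalently, and staying in dimension three, write $Y=X\cup\bigl(\bigsqcup_i V_i\bigr)$, where $X$ is the link exterior and $V_i$ are the surgery solid tori, glued so that the meridian disk of $V_i$ is bounded by $n_i\mu_i+\lambda_i$. I will use the three-dimensional version, since it sits directly beside Lemma~\ref{lem:filling}.

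First, compute $H_1(X;\Z)$. By Alexander duality, $H_1(X;\Z)\cong\Z^m$, freely generated by the meridians $\mu_1,\dots,\mu_m$. In this basis the preferred longitude satisfies
\[
\lambda_i=\sum_{j\neq i}\lk(K_i,K_j)\,\mu_j ,
\]
because $\lambda_i$ is nullhomologous in the exterior of $K_i$ alone. Its class in $H_1(X)$ is therefore detected by the linking numbers with the other components, which is again Alexander duality. Consequently the filling curve is
\[
n_i\mu_i+\lambda_i=\sum_j a_{ij}\mu_j ,
\]
that is, the $i$-th row (equivalently the $i$-th column, since $A$ is symmetric) of $A$.

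Second, apply Mayer--Vietoris to $Y=X\cup\bigsqcup_i V_i$ with intersection $\bigsqcup_i T_i$. Each $V_i$ is a solid torus whose $H_1$ is generated by its core. Alternatively, and more cleanly, view the filling of $V_i$ as attaching a $2$-cell along $n_i\mu_i+\lambda_i$ followed by a $3$-cell. Attaching a $3$-cell does not change $H_1$. Attaching a $2$-cell along a curve $c$ kills exactly the class $[c]$. Hence
\[
H_1(Y)\cong H_1(X)\big/\bigl\langle n_i\mu_i+\lambda_i\bigr\rangle_{i=1}^{m}=\Z^m/\operatorname{im}A=\coker A .
\]
This is also consistent with the surjectivity in Lemma~\ref{lem:filling}.

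The only step needing care is the longitude formula, in particular its sign conventions and the fact that the coefficient on $\mu_i$ itself is $0$ for the preferred longitude. I would justify it by Alexander duality: the class of a curve $\gamma$ in $H_1(S^3\setminus L)$ has $j$-th coordinate $\lk(\gamma,K_j)$. Then $\lk(\lambda_i,K_i)=0$ by the definition of the preferred longitude, and $\lk(\lambda_i,K_j)=\lk(K_i,K_j)$ for $j\neq i$, since $\lambda_i$ is isotopic to $K_i$ in a neighborhood of $K_i$ that is disjoint from $K_j$.
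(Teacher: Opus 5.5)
Your proof is correct. It is the standard argument: the meridians freely generate $H_1$ of the exterior, the preferred longitude is $\sum_{j\neq i}\lk(K_i,K_j)\,\mu_j$, and each filling attaches a $2$-cell along $n_i\mu_i+\lambda_i$ followed by a $3$-cell, with the symmetry of $A$ identifying the row relations with $\operatorname{im}A$. The paper does not prove this lemma itself; it cites Saveliev's Lecture~2 for this presentation, so your argument supplies exactly the omitted standard proof.
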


For a finitely generated abelian group $A$, let $d(A)$ denote its minimum number of generators.

\begin{corollary}\label{cor:ordinary-bound}
For every closed oriented $3$-manifold $Y$,
\[
   \DS(Y)\geq d\bigl(H_1(Y;\Z)\bigr).
\]
\end{corollary}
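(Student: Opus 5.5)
The plan is to read the bound off the homology presentation in Lemma~\ref{lem:matrix}. Suppose $Y$ is obtained by integral surgery on an $m$-component link $L=K_1\cup\cdots\cup K_m$ with coefficients $n_1,\dots,n_m$, and choose such a description with $m=\DS(Y)$; this minimum exists by the Lickorish--Wallace theorem. Form the linking matrix $A$ as in Lemma~\ref{lem:matrix}. Then
\[
   H_1(Y;\Z)\cong\coker(A\colon\Z^m\to\Z^m).
\]

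Next observe that the cokernel of any homomorphism into $\Z^m$ is a quotient of $\Z^m$. Hence it is generated by the images of the $m$ standard basis vectors. Topologically, these images are the classes of the meridians $\mu_1,\dots,\mu_m$. Therefore
\[
   d\bigl(H_1(Y;\Z)\bigr)\le m=\DS(Y).
\]

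Alternatively, one can argue with Lemma~\ref{lem:filling}. The exterior $X$ of $L$ has $H_1(X;\Z)\cong\Z^m$ by Alexander duality, with basis the meridians, and $H_1(X;\Z)$ surjects onto $H_1(Y;\Z)$. Either route gives the same bound. The case $m=0$ is consistent: there $Y=S^3$ and $d=0$. There is no real obstacle. The only point to check is that $d$ cannot increase under passing to a quotient, which is immediate since the images of generators still generate.
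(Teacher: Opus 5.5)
Your proposal is correct and matches the paper's intended argument: the corollary is stated immediately after Lemma~\ref{lem:matrix} without a separate proof because $H_1(Y;\Z)\cong\coker(A)$ is a quotient of $\Z^m$ and hence needs at most $m$ generators. Your alternative via the meridian surjection of Lemma~\ref{lem:filling} is also valid.
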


For a one-component periodic surgery description, the homology of the knot exterior gives a useful obstruction.

\begin{lemma}\label{lem:one-component}
If $(Y,\phi)$ has a one-component periodic surgery description, then
\[
   \phi_*=\pm\id
   \qquad\text{on }H_1(Y;\Z).
\]
This applies to both augmented and non-augmented descriptions.
\end{lemma}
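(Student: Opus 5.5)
We prove Lemma~\ref{lem:one-component} by combining Lemma~\ref{lem:filling} with the observation that the homology of a one-component knot exterior is infinite cyclic. Suppose $(Y,\phi)$ has a one-component periodic surgery description $L=K$. Via $h$, we may replace $(Y,\phi)$ by $(S^3_K,\rho_K)$, since conjugation does not change whether $\phi_*=\pm\id$. Let $X=S^3\setminus \nu(K)$, where $\nu(K)$ is a $\rho_n$-invariant tubular neighborhood. Condition (1) in the definition of a $\rho_n$-periodic link says $\rho_n(K)=K$, so $\rho_n$ restricts to a diffeomorphism $\rho_n|_X$ of $X$. The induced map $\rho_K$ on $S^3_K$ is, by construction, the extension of $\rho_n|_X$ over the surgery solid torus. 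By Alexander duality, $H_1(X;\Z)\cong\Z$, generated by the meridian $\mu$ of $K$. The equivariant version of Lemma~\ref{lem:filling} with $R=\Z$ gives a surjection
\[
   i_*\colon H_1(X;\Z)\twoheadrightarrow H_1(S^3_K;\Z)
\]
satisfying $i_*\circ(\rho_n|_X)_*=(\rho_K)_*\circ i_*$.

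Next we show that $(\rho_n|_X)_*=\pm\id$ on $H_1(X;\Z)\cong\Z$. This holds automatically, because the only automorphisms of $\Z$ are $\pm\id$. Concretely, $\rho_n$ sends $\mu$ to a meridian of $K$ whose orientation is either preserved or reversed. It is preserved if $\rho_n$ preserves the orientation of $K$, and reversed otherwise; the latter can only happen in the non-augmented case, when $K$ is disjoint from the axis. In the augmented case, $K=\Fix(\rho_n)$, so $\rho_n$ fixes $K$ pointwise and rotates the meridian disk, giving $+\id$. In either case the sign is some $\varepsilon\in\{\pm1\}$. Surjectivity of $i_*$ then gives, for any $y=i_*(x)$,
\[
   (\rho_K)_*y=i_*\bigl((\rho_n|_X)_*x\bigr)=i_*(\varepsilon x)=\varepsilon y.
\]
Hence $\phi_*=\varepsilon\,\id$.

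The only point needing care is the equivariance: the extension $\rho_K$ must restrict to $\rho_n$ on $X$, and the inclusion $X\hookrightarrow S^3_K$ must intertwine the two maps. Both hold by Sakuma's construction recalled above, since $\rho_L$ is defined by extending the boundary action over the filling solid torus. The remark about augmented versus non-augmented descriptions is then immediate, because the argument never used whether $K$ meets the axis. Only the fact $H_1(X;\Z)\cong\Z$ was used.
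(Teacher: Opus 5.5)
Your proposal is correct and is essentially the paper's argument: $H_1$ of the knot exterior is $\Z$, so the induced automorphism is $\pm\id$, and the equivariant surjection of Lemma~\ref{lem:filling} carries this down to $H_1(Y;\Z)$. Your aside that $\rho_n$ might reverse the orientation of $K$ in the non-augmented case is harmless but can never actually occur: $\rho_n$ restricts to a fixed-point-free homeomorphism of the circle $K$, and such a homeomorphism must preserve orientation, so the sign is in fact always $+1$.
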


\begin{proof}
Let $K\subset S^3$ be the periodic framed knot and let $X=S^3\setminus\Int N(K)$. Since $H_1(X;\Z)\cong\Z$, the induced automorphism of $H_1(X;\Z)$ is multiplication by $+1$ or $-1$. Lemma~\ref{lem:filling} gives an equivariant quotient map
\[
   H_1(X;\Z)\twoheadrightarrow H_1(Y;\Z),
\]
so the induced action on the quotient is again $+\id$ or $-\id$.
\end{proof}

\begin{corollary}\label{cor:two-components}
If $\phi_*\neq\pm\id$ on $H_1(Y;\Z)$, then
\[
   \EDS(Y,\phi)\geq 2.
\]
\end{corollary}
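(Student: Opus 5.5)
The corollary follows directly from Lemma~\ref{lem:one-component} by taking the contrapositive and excluding the two small cases $\EDS(Y,\phi)\in\{0,1\}$. By Sakuma's theorem $\EDS(Y,\phi)$ is a finite nonnegative integer. So it suffices to show that $(Y,\phi)$ admits neither a zero-component nor a one-component integral periodic surgery description.

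For one component we argue by contradiction. Suppose some integral periodic surgery description of $(Y,\phi)$ has exactly one component, whether augmented or non-augmented. Lemma~\ref{lem:one-component} then gives $\phi_*=\pm\id$ on $H_1(Y;\Z)$, which contradicts the hypothesis. One point needs checking: the lemma is stated for $\rho_L$ on $S^3_L$, while the hypothesis concerns $\phi$ on $Y$. The identification $h\phi h^{-1}=\rho_L$ gives $h_*\phi_*h_*^{-1}=(\rho_L)_*$. Being $\pm\id$ is invariant under conjugation, so the conclusion transfers to $\phi_*$.

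For zero components, $S^3_L=S^3$ and $H_1(Y;\Z)\cong H_1(S^3;\Z)=0$. On the zero group the only automorphism is $\id$, so $\phi_*=\id$, again contradicting the hypothesis.

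Neither case is a real obstacle. The only step needing care is making sure both the augmented and the non-augmented one-component descriptions are covered, and Lemma~\ref{lem:one-component} explicitly covers both. Therefore every periodic description has at least two components, and $\EDS(Y,\phi)\ge 2$.
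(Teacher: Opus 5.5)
Your proof is correct and is the intended argument: the paper gives no separate proof and treats the corollary as the immediate contrapositive of Lemma~\ref{lem:one-component}. Your explicit handling of the zero-component case, where $H_1(Y;\Z)=0$ forces $\phi_*=\id=-\id$, fills in the one detail the paper leaves implicit.
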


\section{The symmetric Hopf-link block}

Consider
\[
   S^3=\bigl\{(z_1,z_2)\in\C^2:|z_1|^2+|z_2|^2=1\bigr\}
\]
and set
\[
   K_1=\{z_2=0\},
   \qquad
   K_2=\{z_1=0\}.
\]
Then $K_1\cup K_2$ is a Hopf link. Consider the involution
\[
   \rho(z_1,z_2)=(z_2,z_1).
\]
This map is unitary and therefore orientation-preserving. In the unitary coordinates
\[
   u=\frac{z_1+z_2}{\sqrt2},
   \qquad
   v=\frac{z_1-z_2}{\sqrt2},
\]
we have $\rho(u,v)=(u,-v)$. Thus $\rho$ is conjugate to the standard half-turn about the unknot $\{v=0\}$. It exchanges $K_1$ and $K_2$, both of which are disjoint from the rotation axis.

Give both components framing $+4$. The resulting framed link is $\rho$-periodic. Let $Y$ denote the surgered manifold, and let $\tau\colon Y\to Y$ be the involution induced by $\rho$. The diagram has two components, so
\[
   \EDS(Y,\tau)\leq 2.
\]

\begin{proposition}\label{prop:block}
There is an identification
\[
   H_1(Y;\Z)\cong\Z/15
\]
under which $\tau_*$ is multiplication by $-4$. Moreover,
\[
   \DS(Y)=1,
   \qquad
   \EDS(Y,\tau)=2.
\]
\end{proposition}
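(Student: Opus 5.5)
Three things need to be established: the homology with the $\tau$-action, the bound $\DS(Y)=1$, and the bound $\EDS(Y,\tau)=2$.

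\textbf{Homology and the action.} Orient $K_1,K_2$ as the Hopf fibers, so that $\lk(K_1,K_2)=1$. The swap $\rho$ carries $K_1$ onto $K_2$ preserving these orientations, since $\rho$ maps the orientation $\theta\mapsto(e^{i\theta},0)$ to $\theta\mapsto(0,e^{i\theta})$. By Lemma~\ref{lem:matrix}, $H_1(Y;\Z)$ is the cokernel of
\[
A=\begin{pmatrix}4&1\\1&4\end{pmatrix},
\qquad \det A=15 .
\]
It is generated by the meridian classes $\mu_1,\mu_2$, with relations $4\mu_1+\mu_2=0$ and $\mu_1+4\mu_2=0$. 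The first relation gives $\mu_2=-4\mu_1$, and then the second becomes $\mu_1-16\mu_1=-15\mu_1=0$. Hence $H_1\cong\Z/15$, generated by $\mu_1$.

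Because $\rho$ preserves the chosen orientations, $\tau_*(\mu_1)=\mu_2=-4\mu_1$. So $\tau_*$ is multiplication by $-4$. Since $-4\not\equiv\pm1\pmod{15}$, we have $\tau_*\neq\pm\id$.

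\textbf{The equivariant number.} Corollary~\ref{cor:two-components} now gives $\EDS(Y,\tau)\ge2$. The explicit two-component diagram gives $\EDS(Y,\tau)\le2$, so $\EDS(Y,\tau)=2$.

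\textbf{Identifying $Y$ as a lens space.} Slam-dunk or Rolfsen twist one component. Regard $K_2$ as a meridian of $K_1$ and replace the pair by a single unknot with rational coefficient $4-1/4=15/4$. Thus $Y\cong L(15,4)$ up to orientation and convention, where $L(15,4)\cong L(15,-4)$ since $4\cdot4\equiv1$.

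\textbf{The ordinary number.} This is the main obstacle. Since $H_1(Y;\Z)\ne0$, we have $Y\not\cong S^3$, and therefore $\DS(Y)\ge1$. For the upper bound I need an integral surgery on a knot producing $\pm L(15,4)$, and I will use Theorem~\ref{thm:Moser} with a torus knot. Take $K(r,s)$ with $m=1$ and $n=-N$, so the hypothesis reads $|rs-N|=1$ and the result is $L(N,s^2)$. I need $N=15$, so $rs\in\{14,16\}$. The choice $(r,s)=(7,2)$ gives $rs=14$ and $L(15,4)$ directly. By Remark~\ref{rem:conventions}, this is $+15$-surgery on $K(7,2)$.

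It remains to check that this matches $Y$ up to orientation. Lens spaces $L(15,q)$ are classified by $q\equiv\pm4^{\pm1}\pmod{15}$, so both $\pm L(15,4)$ are covered. Also $\DS$ is orientation-insensitive, by the mirror remark earlier in the paper. Together these give $\DS(Y)=1$.

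The delicate point is tracking conventions carefully, so that the slam-dunk parameter and Moser's parameter land in the same class $\{\pm4^{\pm1}\}$ modulo $15$. Here they do: $\{4,-4,4^{-1}=4,-4\}$ is the only class in question, so any sign discrepancy is harmless.
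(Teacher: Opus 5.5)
Your proof is correct and follows the paper's argument essentially step for step. You use the linking matrix with entries $4,1,1,4$ and $\tau_*(\mu_1)=\mu_2=-4\mu_1$ for the homology, Corollary~\ref{cor:two-components} plus the Hopf-link diagram for $\EDS(Y,\tau)=2$, and the slam-dunk to $15/4$ together with Theorem~\ref{thm:Moser} for $+15$-surgery on $K(7,2)$ (mirroring if necessary) for $\DS(Y)=1$.

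One small quibble: $L(15,4)\cong L(15,-4)$ as unoriented manifolds holds for every $q$ because $L(p,-q)\cong -L(p,q)$, so $4\cdot4\equiv1\pmod{15}$ is not the reason for it. That congruence only shows $4^{-1}\equiv4$, which is what collapses $\{\pm4^{\pm1}\}$ to $\{\pm4\}$.
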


\begin{proof}
Orient $K_1$ and $K_2$ by
\[
   t\longmapsto(e^{it},0),
   \qquad
   t\longmapsto(0,e^{it}).
\]
They form the positive Hopf link, so $\lk(K_1,K_2)=1$. The involution $\rho$ preserves these orientations while exchanging the components. The surgery linking matrix is therefore
\[
   A=\begin{pmatrix}4&1\\[2pt]1&4\end{pmatrix}.
\]
If $x$ and $y$ denote the meridian classes, Lemma~\ref{lem:matrix} gives
\[
   H_1(Y;\Z)
   \cong
   \langle x,y\mid 4x+y=0,\ x+4y=0\rangle.
\]
The first relation gives $y=-4x$, and the second then gives $15x=0$. Since $|\det A|=15$, the element $x$ has order exactly $15$. Hence $H_1(Y;\Z)\cong\Z/15$. Because the half-turn exchanges the two positively oriented meridians,
\[
   \tau_*(x)=y=-4x.
\]
Since $-4\not\equiv\pm1\pmod{15}$, Corollary~\ref{cor:two-components} gives $\EDS(Y,\tau)\geq2$. The opposite inequality is supplied by the Hopf-link diagram, and hence $\EDS(Y,\tau)=2$.

To compute the ordinary surgery number, first apply a slam-dunk move~\cite[Section~5.3]{GS}. Eliminating one component changes the coefficient on the other to
\[
   4-\frac14=\frac{15}{4}.
\]
Thus, as an unoriented manifold, $Y$ is $L(15,4)$. The same lens space is obtained by applying Theorem~\ref{thm:Moser} to $K(7,2)$ with $(m,n)=(1,-15)$. By Remark~\ref{rem:conventions}, this is $+15$ integral surgery in our convention, and
\[
   |7\cdot2\cdot1-15|=1.
\]
Moser's theorem therefore gives $L(15,4)$. If its orientation is opposite to that of $Y$, mirroring the knot and reversing the coefficient gives the required orientation. Hence $\DS(Y)\leq1$. Since $H_1(Y;\Z)$ is nontrivial, $Y\neq S^3$, so $\DS(Y)=1$.
\end{proof}

\section{A homological lower bound}

The action computed in the preceding section has the additional property
\begin{equation}\label{eq:congruences}
   -4\equiv 1\pmod5,
   \qquad
   -4\equiv -1\pmod3.
\end{equation}
Thus $\tau_*$ acts as $+\id$ on $H_1(Y;\F_5)$ and as $-\id$ on $H_1(Y;\F_3)$. This suggests measuring the positive and negative eigenspaces over different odd primes. We now show that the resulting dimensions can nevertheless be added to give a lower bound for the equivariant surgery number.

For an odd prime $\ell$ and $\delta\in\{+1,-1\}$, define
\[
   b^\delta_\ell(Y,\tau)
   =\dim_{\F_\ell}
   \ker\bigl(\tau_*-\delta\id:H_1(Y;\F_\ell)\to H_1(Y;\F_\ell)\bigr),
\]
and set
\[
   b^\delta(Y,\tau)=\max_{\ell\text{ odd prime}}b^\delta_\ell(Y,\tau).
\]
These maxima are finite because $H_1(Y;\Z)$ is finitely generated. We restrict to odd primes so that $+1$ and $-1$ are distinct and the projections $(1\pm\tau_*)/2$ are defined.

To compare eigenspace dimensions over different coefficient fields, we shall use the following elementary linear-algebraic fact about signed permutation involutions.

\begin{lemma}\label{lem:signed-permutation}
Let $V\cong\Z^m$ have basis $e_1,\dots,e_m$, and let $T\colon V\to V$ be an involution satisfying
\[
   T(e_i)=\varepsilon_i e_{\pi(i)},
   \qquad
   \varepsilon_i\in\{\pm1\}.
\]
There are nonnegative integers $r_+$ and $r_-$ with $r_++r_-=m$ such that, over every field $\Bbbk$ of characteristic different from $2$,
\[
   \dim_{\Bbbk}\ker(T-\id)=r_+,
   \qquad
   \dim_{\Bbbk}\ker(T+\id)=r_-.
\]
In particular, these dimensions do not depend on the field.
\end{lemma}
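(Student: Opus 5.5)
The plan is to decompose the basis into orbits of the permutation $\pi$ and compute the eigenspace dimensions orbit by orbit. Since $T^2=\id$, we get $T^2(e_i)=\varepsilon_i\varepsilon_{\pi(i)}e_{\pi^2(i)}=e_i$. Hence $\pi^2=\id$, and $\varepsilon_i\varepsilon_{\pi(i)}=1$ whenever $\pi(i)\neq i$; when $\pi(i)=i$ the condition is simply $\varepsilon_i^2=1$, which is automatic. So $\{1,\dots,m\}$ splits into fixed points of $\pi$ and $2$-element orbits $\{i,\pi(i)\}$. The span of the basis vectors in each orbit is $T$-invariant, and $V$ is the direct sum of these spans. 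The same decomposition holds after tensoring with any field $\Bbbk$, and both $\ker(T-\id)$ and $\ker(T+\id)$ split accordingly.

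Next I will compute the contribution of each orbit type. For a fixed point $i$ with $\varepsilon_i=+1$, the line $\Bbbk e_i$ lies in $\ker(T-\id)$ and meets $\ker(T+\id)$ trivially, because $2\neq0$ in $\Bbbk$. For a fixed point with $\varepsilon_i=-1$, the roles are reversed. For a $2$-element orbit $\{i,j\}$ with $T e_i=\varepsilon e_j$ and $T e_j=\varepsilon e_i$, the vector $e_i+\varepsilon e_j$ spans the $+1$-eigenspace and $e_i-\varepsilon e_j$ spans the $-1$-eigenspace. These two vectors are independent precisely because $\operatorname{char}\Bbbk\neq2$: the change-of-basis determinant is $-2\varepsilon$. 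So each $2$-orbit contributes exactly one dimension to each eigenspace.

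Finally, set $r_+$ equal to the number of $\pi$-fixed $i$ with $\varepsilon_i=1$ plus the number of $2$-orbits, and $r_-$ equal to the number of $\pi$-fixed $i$ with $\varepsilon_i=-1$ plus the number of $2$-orbits. Then $r_++r_-=m$, and summing over orbits gives the stated dimensions for every such field. The only point needing care is the independence of the two eigenvectors on a $2$-orbit, together with the disjointness of the $\pm1$ lines. Both use $2\in\Bbbk^\times$, which is exactly where the characteristic hypothesis enters. Alternatively, one can note that $V\otimes\Bbbk=\ker(T-\id)\oplus\ker(T+\id)$ via the projections $(1\pm T)/2$.
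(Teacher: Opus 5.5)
Your proof is correct and complete. The following steps all check out:
\begin{itemize}
\item the deductions $\pi^2=\id$ and $\varepsilon_i=\varepsilon_{\pi(i)}$ on $2$-cycles;
\item the decomposition of $V$, and of $V\otimes\Bbbk$, into $T$-invariant orbit spans;
\item the per-orbit count giving $r_+=f_++t$ and $r_-=f_-+t$, so that $r_++r_-=f_++f_-+2t=m$. Here $f_\pm$ is the number of $\pi$-fixed indices with $\varepsilon_i=\pm1$, and $t$ is the number of $2$-cycles.
\end{itemize}

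The paper gives no proof of this lemma. It is stated as an elementary fact and used directly in the proof of Proposition~\ref{prop:homological-bound}, so there is no competing route to compare. Your orbit computation supplies the omitted argument. It also exhibits $r_\pm$ as explicit integers defined before any field is chosen, which is exactly the field-independence that the proposition relies on.

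One small remark on your closing sentence. The splitting $V\otimes\Bbbk=\ker(T-\id)\oplus\ker(T+\id)$ via $(1\pm T)/2$ does handle the ``point needing care'' you mention. On its own, however, it only shows that the two dimensions add up to $m$ over each field, not that each is independent of $\Bbbk$.

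If you wanted a proof built on that idea, note that $(1\pm T)/2$ are already complementary idempotents on $\Z[1/2]^m$. Their images are free $\Z[1/2]$-modules of some ranks $r_\pm$. Base change to any $\Bbbk$ with $\operatorname{char}\Bbbk\neq2$ identifies the two eigenspaces with these images tensored with $\Bbbk$. That version never uses the signed-permutation form of $T$, so the lemma holds for every involution of $\Z^m$. Your argument is more concrete and is entirely sufficient for the paper's application.
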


\begin{proposition}\label{prop:homological-bound}
For every closed oriented $3$-manifold $Y$ with an orientation-preserving involution $\tau$,
\[
   \EDS(Y,\tau)\geq b^+(Y,\tau)+b^-(Y,\tau).
\]
\end{proposition}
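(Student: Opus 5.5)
Take a periodic surgery description $L$ of $(Y,\tau)$ with $m=\#\pi_0(L)$ components, and let $X$ be the exterior of $L$, with the involution $\rho$ restricting to $X$. By Lemma~\ref{lem:filling} there is a $\tau$-equivariant surjection $H_1(X;\F_\ell)\twoheadrightarrow H_1(Y;\F_\ell)$ for every odd prime $\ell$. By Alexander duality, $H_1(X;\Z)\cong\Z^m$ has as basis the oriented meridians $e_1,\dots,e_m$ of the components. The rotation permutes the components, and an orientation-preserving diffeomorphism of $S^3$ carries an oriented meridian of $K_i$ to $\pm$ the oriented meridian of $\rho(K_i)$. Hence the induced map $T=\rho_*$ on $H_1(X;\Z)$ is a signed permutation involution. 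This also covers the augmented case: the axis component is sent to itself, possibly with reversed orientation, so its meridian goes to $\pm$ itself.

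By Lemma~\ref{lem:signed-permutation} there are integers $r_+,r_-$ with $r_++r_-=m$. Over every field of odd characteristic, the $\pm1$-eigenspaces of $T$ have dimensions $r_\pm$. Since $T^2=\id$ and $2$ is invertible in $\F_\ell$, the space $H_1(X;\F_\ell)$ splits as the direct sum of these eigenspaces via the projections $(1\pm T)/2$. The same holds for $H_1(Y;\F_\ell)$ with $\tau_*$.

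The key step is the following observation. An equivariant surjection $f$ commutes with the projections $(1\pm T)/2$ and $(1\pm\tau_*)/2$. Therefore it maps the $\delta$-eigenspace of $T$ onto the $\delta$-eigenspace of $\tau_*$: given $y$ in the latter, lift it to some $x$; then $\tfrac12(1+\delta T)x$ lies in the $\delta$-eigenspace and maps to $y$. It follows that $b^\delta_\ell(Y,\tau)\le r_\delta$ for every odd prime $\ell$ and each sign $\delta$. Because $r_\delta$ does not depend on $\ell$, taking the maximum over $\ell$ gives $b^\delta(Y,\tau)\le r_\delta$. This independence is exactly what allows the maxima over possibly different primes to be added. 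Consequently
\[
   b^+(Y,\tau)+b^-(Y,\tau)\le r_++r_-=m.
\]
Minimizing over all periodic descriptions yields $\EDS(Y,\tau)\ge b^+(Y,\tau)+b^-(Y,\tau)$.

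The main point needing care is identifying $\rho_L$ restricted to $X$ with $\rho$ itself, and checking that the meridian basis is genuinely carried to a signed permutation of itself rather than merely up to homology. Meridians are determined in $H_1(X)$ by linking number $\pm1$ with a single component and $0$ with the others. Since $\rho$ preserves orientation of $S^3$, it preserves linking numbers up to the orientation changes of the components, and this settles the point.
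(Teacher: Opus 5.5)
Your proposal is correct and follows essentially the same route as the paper. The meridian basis makes the half-turn a signed permutation $T$ on $H_1(X;\Z)$, Lemma~\ref{lem:signed-permutation} gives field-independent eigenspace dimensions $r_\pm$, and the projection $(1+\delta T)/2$ shows that the equivariant surjection of Lemma~\ref{lem:filling} is onto each eigenspace, so $b^+$ and $b^-$ can be bounded separately. Your additional remarks on the augmented case and on recognizing meridians via linking numbers spell out points the paper states without comment.
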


\begin{proof}
Let $L$ be an $m$-component integral periodic surgery description inducing $(Y,\tau)$, and let
\[
   X=S^3\setminus\Int N(L)
\]
be the link exterior. Choose orientations on the components of $L$. Their meridians form a basis of $H_1(X;\Z)\cong\Z^m$. The ambient half-turn permutes the components and sends each oriented meridian to a meridian or its negative. Its action $T$ on $H_1(X;\Z)$ is therefore a signed permutation involution.

By Lemma~\ref{lem:signed-permutation}, there are integers $r_+,r_-\geq0$, independent of the odd prime $\ell$, such that
\[
   \begin{aligned}
   H_1(X;\F_\ell)&=W^+_\ell\oplus W^-_\ell,\\
   \dim W^+_\ell&=r_+,
   \qquad
   \dim W^-_\ell=r_-,
   \qquad
   r_++r_-=m.
   \end{aligned}
\]
Lemma~\ref{lem:filling} gives an equivariant surjection
\[
   f_\ell\colon H_1(X;\F_\ell)\twoheadrightarrow H_1(Y;\F_\ell),
   \qquad
   f_\ell T=\tau_*f_\ell.
\]
The restriction of $f_\ell$ to each eigenspace is surjective. To see this, let $y$ be a $\delta$-eigenvector and choose $x$ with $f_\ell(x)=y$. Then
\[
   x_\delta=\frac{x+\delta T(x)}2
\]
is a $\delta$-eigenvector and satisfies $f_\ell(x_\delta)=y$. It follows that
\[
   b^+_\ell(Y,\tau)\leq r_+,
   \qquad
   b^-_\ell(Y,\tau)\leq r_-
\]
for every odd prime $\ell$. Since $r_+$ and $r_-$ are independent of $\ell$, the two maxima may be taken separately:
\[
   b^+(Y,\tau)\leq r_+,
   \qquad
   b^-(Y,\tau)\leq r_-.
\]
It follows that
\[
   b^+(Y,\tau)+b^-(Y,\tau)\leq r_++r_-=m.
\]
Minimizing over all periodic surgery descriptions proves the proposition.
\end{proof}

\begin{remark}\label{rem:different-primes}
The maxima defining $b^+$ and $b^-$ need not occur at the same prime. Lemma~\ref{lem:signed-permutation} is precisely what permits this: both quantities are bounded by field-independent dimensions of the meridian representation. For the block in Section~3, the positive contribution is detected modulo $5$ and the negative contribution modulo $3$.
\end{remark}

\section{Arbitrarily large gaps}

We now construct the pair $(Y_k,\tau_k)$ appearing in Theorem~\ref{thm:main}. The construction starts by placing $k$ copies of the basic block under a single half-turn. Choose a point $p_\infty$ on the rotation axis, disjoint from the Hopf link, and identify $S^3\setminus\{p_\infty\}$ with $\mathbb R^3$ by stereographic projection. In suitable coordinates, the involution becomes
\[
   R(x,y,z)=(-x,-y,z),
\]
the half-turn about the $z$-axis. Let $H$ denote the $(+4,+4)$-framed Hopf link from Section~3. Since $H$ is compact, it is contained in an $R$-invariant round ball $B$. Write the center and radius of $B$ as $(0,0,c)$ and $r$, respectively.

For $i=1,\dots,k$, define
\[
   s_i(x,y,z)=(x,y,z+3ri)
\]
and set
\[
   B_i=s_i(B),
   \qquad
   H_i=s_i(H).
\]
The ball $B_i$ has center $(0,0,c+3ri)$ and radius $r$. If $i\neq j$, the distance between the centers of $B_i$ and $B_j$ is $3r|i-j|\geq3r>2r$, so the balls are pairwise disjoint. Moreover, each $s_i$ commutes with $R$, so every $B_i$ is $R$-invariant and $R$ exchanges the two components of $H_i$. Set
\[
   L_k=H_1\cup\cdots\cup H_k.
\]
Then $L_k$ is a split $R$-periodic framed link with $2k$ components. Let $Y_k=S^3_{L_k}$, and let
\[
   \tau_k\colon Y_k\longrightarrow Y_k
\]
be the involution obtained by extending $R$ over the surgery solid tori.

The split-link description gives an orientation-preserving diffeomorphism
\[
   Y_k\cong\mathop{\#}_{i=1}^kY.
\]
Indeed, perform the prescribed surgery on $H_i$ inside $B_i$, leaving $\partial B_i$ fixed, and denote the resulting manifold with boundary by $M_i$. Gluing the untouched complementary ball to $M_i$ gives the result of surgery on a single basic block, so
\[
   M_i\cong Y\setminus\Int(B^3).
\]
The complement of the interiors of the balls $B_1,\dots,B_k$ is a $3$-sphere with $k$ open balls removed. Gluing the manifolds $M_i$ to its boundary components is therefore the standard connected-sum construction.

The connected-sum decomposition is compatible with the involutions. The half-turn $R$ preserves each $B_i$, rather than permuting the balls, and its restriction to $(B_i,H_i)$ is conjugate via $s_i$ to its restriction to the original block $(B,H)$. Thus $\tau_k$ acts on each connected-sum factor as a copy of $\tau$. Proposition~\ref{prop:block} consequently gives
\[
   H_1(Y_k;\Z)
   \cong\bigoplus_{i=1}^kH_1(Y;\Z)
   \cong(\Z/15)^k
\]
and, under this direct-sum identification,
\[
   (\tau_k)_*
   =\bigoplus_{i=1}^k\tau_*
   =(-4)\id.
\]

\begin{proof}[Proof of Theorem~\ref{thm:main}]
The periodic surgery link $L_k$ has $2k$ components, so
\[
   \EDS(Y_k,\tau_k)\leq2k.
\]
For the reverse inequality, reduce the action above modulo $5$ and modulo $3$. By~\eqref{eq:congruences},
\[
   H_1(Y_k;\F_5)\cong\F_5^k,
   \qquad
   (\tau_k)_*=\id,
\]
whereas
\[
   H_1(Y_k;\F_3)\cong\F_3^k,
   \qquad
   (\tau_k)_*=-\id.
\]
Hence
\[
   b^+(Y_k,\tau_k)\geq k,
   \qquad
   b^-(Y_k,\tau_k)\geq k.
\]
Proposition~\ref{prop:homological-bound} gives $\EDS(Y_k,\tau_k)\geq2k$, and therefore
\[
   \EDS(Y_k,\tau_k)=2k.
\]

For the ordinary surgery number, Proposition~\ref{prop:block} gives a one-knot integral surgery description of each summand $Y$. Their split union shows that $\DS(Y_k)\leq k$. Conversely,
\[
   d\bigl(H_1(Y_k;\Z)\bigr)
   =d\bigl((\Z/15)^k\bigr)
   =k,
\]
and Corollary~\ref{cor:ordinary-bound} gives $\DS(Y_k)\geq k$. Thus $\DS(Y_k)=k$.
\end{proof}

\section{An infinite irreducible family}

The reducible examples above give an unbounded gap. To obtain irreducible examples, we use the coordinate-exchange symmetry in the quotient model of a lens space.

\begin{lemma}\label{lem:coordinate-exchange}
Let $p>1$ and let $q$ be relatively prime to $p$. If
\[
   q^2\equiv1\pmod p,
\]
then the coordinate exchange
\[
   \sigma(z_1,z_2)=(z_2,z_1)
\]
descends to an orientation-preserving involution of $L(p,q)$. Under the identification
\[
   \pi_1(L(p,q))\cong H_1(L(p,q);\Z)\cong\Z/p
\]
coming from the deck group, the induced automorphism is multiplication by $q$.
\end{lemma}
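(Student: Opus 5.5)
Throughout, $g(z_1,z_2)=(\zeta z_1,\zeta^q z_2)$ is the preferred deck generator. The plan has three steps: show that $\sigma$ normalizes $\langle g\rangle$, check that the descended map is an orientation-preserving involution, and compute its action on $\pi_1$ through the conjugation action on the deck group.

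\emph{Step 1: $\sigma$ normalizes $\langle g\rangle$.} Compute
\[
   \sigma g\sigma^{-1}(z_1,z_2)=\sigma g(z_2,z_1)=\sigma(\zeta z_2,\zeta^q z_1)=(\zeta^q z_1,\zeta z_2).
\]
Since $q^2\equiv1\pmod p$, we have $\zeta=\zeta^{q\cdot q}$. Hence
\[
   (\zeta^q z_1,\zeta z_2)=\bigl(\zeta^q z_1,(\zeta^{q})^{q}z_2\bigr)=g^q(z_1,z_2),
\]
so $\sigma g\sigma^{-1}=g^q$. Consequently $\sigma$ maps $\langle g\rangle$-orbits to $\langle g\rangle$-orbits and descends to a diffeomorphism $\bar\sigma$ of $L(p,q)$. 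Because $\sigma^2=\id$, we get $\bar\sigma^2=\id$.

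\emph{Step 2: orientation.} The coordinate swap is unitary, exactly as in Section~3. It lies in the connected group $U(2)$, so it is orientation-preserving on $S^3$. Since the covering map preserves orientation, $\bar\sigma$ preserves orientation as well. I will also check that $\bar\sigma$ is not the identity, so that it is a genuine involution. If $\bar\sigma=\id$, then $\sigma$ would equal some deck transformation $g^j$. But every $g^j$ is diagonal, whereas $\sigma$ is not, so this is impossible.

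\emph{Step 3: the induced automorphism.} This is the only point needing care, namely the basepoint and the identification of $\pi_1$ with the deck group. For a lift $\tilde f$ of a map $f$ of the quotient, the induced map on $\pi_1$ corresponds, under the deck-group identification, to conjugation $h\mapsto\tilde f h\tilde f^{-1}$. This holds up to an inner automorphism coming from the change of basepoint, and that inner automorphism is trivial because the group is abelian.

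To see this concretely, pick a basepoint $x_0\in S^3$ and a path $\gamma$ from $x_0$ to $g x_0$. Its image loop represents $g$. Then $\sigma\gamma$ runs from $\sigma x_0$ to $\sigma g x_0=g^q\sigma x_0$, so it represents $g^q$ based at $\sigma x_0$. After transporting the basepoint back, which changes nothing since the group is abelian, we get $\bar\sigma_*(g)=g^q$.

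Thus $\bar\sigma_*$ is multiplication by $q$ on $\Z/p$. Via the Hurewicz isomorphism, the same holds on $H_1(L(p,q);\Z)$.
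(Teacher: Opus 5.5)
Your proposal is correct and follows essentially the same route as the paper: you compute $\sigma g\sigma^{-1}=g^q$, deduce that $\sigma$ normalizes the deck group and descends, and read off the action on $\pi_1\cong\Z/p$ from this conjugation. Your additional checks are details the paper leaves implicit, and they are all sound: orientation preservation via $U(2)$, $\bar\sigma^2=\id$, nontriviality of $\bar\sigma$, and the basepoint bookkeeping.
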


\begin{proof}
Let
\[
   g(z_1,z_2)=(\zeta z_1,\zeta^qz_2)
\]
generate the deck group. A direct calculation gives
\[
   \sigma g\sigma^{-1}(z_1,z_2)=(\zeta^qz_1,\zeta z_2).
\]
Since $q^2\equiv1\pmod p$, the right-hand side is $g^q(z_1,z_2)$. Thus $\sigma$ normalizes the deck group and descends to the quotient. Conjugation sends $g$ to $g^q$, so the induced action on the cyclic fundamental group, and hence on first homology, is multiplication by $q$.
\end{proof}

For $a\geq3$, define
\[
   r_a=a^2-a+1,
   \qquad
   p_a=ar_a-1=(a-1)(a^2+1),
   \qquad
   q_a=a^2.
\]
Then $r_a>a$, $\gcd(r_a,a)=1$, and $\gcd(p_a,q_a)=1$.

\begin{proof}[Proof of Theorem~\ref{thm:irreducible}]
Set
\[
   Z_a=L(p_a,q_a).
\]
Apply Theorem~\ref{thm:Moser} to $K(r_a,a)$ with
\[
   (m,n)=(1,1-ar_a)=(1,-p_a).
\]
By Remark~\ref{rem:conventions}, this is $+p_a$ integral surgery in our convention. Since
\[
   |ar_am+n|=1,
\]
the resulting manifold is $L(p_a,a^2)=L(p_a,q_a)$ in Moser's convention. Up to orientation, this is $Z_a$ in the quotient convention; the opposite orientation is handled by mirroring the knot and reversing the surgery coefficient. Hence $\DS(Z_a)\leq1$. Since $H_1(Z_a;\Z)\cong\Z/p_a$ is nontrivial, $\DS(Z_a)=1$.

For the equivariant surgery number, note that
\[
   q_a^2-1=a^4-1=(a+1)(a-1)(a^2+1)=(a+1)p_a.
\]
Thus $q_a^2\equiv1\pmod{p_a}$, and Lemma~\ref{lem:coordinate-exchange} gives an orientation-preserving involution $\sigma_a$ of $Z_a$ whose action on first homology is multiplication by $q_a$. For $a\geq3$,
\[
   1<q_a<p_a-1,
\]
so this automorphism is neither $+\id$ nor $-\id$. Corollary~\ref{cor:two-components} therefore gives
\[
   \EDS(Z_a,\sigma_a)\geq2>1=\DS(Z_a).
\]

The integer $p_a=(a-1)(a^2+1)$ is strictly increasing for $a\geq3$. Hence the groups $H_1(Z_a;\Z)$ have distinct orders, and the lens spaces $Z_a$ are pairwise nonhomeomorphic. Since every lens space is irreducible, the proof is complete.
\end{proof}

\end{document}